\documentclass[10pt,letterpaper]{amsart}
\usepackage[latin1]{inputenc}
\usepackage{amsmath}
\usepackage{amsfonts}
\usepackage{amssymb}
\usepackage{mathrsfs}
\usepackage{url}
\usepackage{color}
\usepackage{graphics}
\usepackage{graphicx}
\usepackage{booktabs}

\theoremstyle{definition}
\newtheorem{theorem}[equation]{Theorem}
\newtheorem{lemma}[equation]{Lemma}

\newtheorem{definition}[equation]{Definition}

\newtheorem{example}[equation]{Example}

\newtheorem{remark}[equation]{Remark}

\begin{document}

\title{Ordered Partitions and \\Drawings of Rooted Plane Trees}
\author{Qingchun Ren}
\date{Jan 27, 2014}

\begin{abstract}
We study the bounded regions in a generic slice of the hyperplane arrangement in $\mathbb{R}^n$ consisting of the hyperplanes defined by $x_i$ and $x_i+x_j$. The bounded regions are in bijection with several classes of combinatorial objects, including the ordered partitions of $[n]$ all of whose left-to-right minima occur at odd locations and the drawings of rooted plane trees with $n+1$ vertices. These are sequences of rooted plane trees such that each tree in a sequence can be obtained from the next one by removing a leaf.
\end{abstract}

\maketitle{}

\section{Introduction}\label{section-introduction}

We define the combinatorial objects to be studied in this paper. The first one is the following hyperplane arrangement on $\mathbb{R}^n$:
\begin{equation*}
\mathcal{H}_n = \{x_i,1\leq{}i\leq{}n\}\cup{}\{x_i+x_j,1\leq{}i<j\leq{}n\}.
\end{equation*}
Let $P$ be the affine hyperplane in $\mathbb{R}^n$ defined by
\begin{equation*}
P=\{l_1x_1+l_2x_2+\dotsb{}+l_nx_n=1\},
\end{equation*}
where $l_1\gg{}l_2\gg{}\dotsb{}\gg{}l_n>0$ (``$\gg{}$" means ``far greater than"). We are interested in the set of bounded regions of the hyperplane arrangement $\mathcal{H}_n\cap{}P = \{H\cap{}P:H\in{}\mathcal{H}_n\}$ in the affine space $P$.

\begin{definition}
An {\it ordered partition} of $[n]=\{1,2,\dotsc{},n\}$ (also called a {\it preferential arrangement} by Gross \cite{citation-gross}) is an ordered sequence $(A_1,\dotsc{},A_k)$ of disjoint non-empty subsets whose union is $[n]$. Each $A_i$ is called a {\it block}. A {\it left-to-right minimum} of $(A_1,\dotsc{},A_k)$ is $m_i=\mathrm{min}(A_1\cup{}\dotsb{}\cup{}A_i)$, where $1\leq{}i\leq{}k$. We say that a left-to-right minimum $m_i$ {\it occurs at an odd location} if $m_i\in{}A_j$ for some odd $j$.
\end{definition}

\begin{definition}
A {\it signed permutation} $((a_1,a_2,\dotsc{},a_n),\sigma{})$ of $[n]$ is a permutation $(a_1,a_2,\dotsc{},a_n)$ of $[n]$ together with a map $\sigma{}\colon{}[n]\to{}\{\pm{}1\}$. $\sigma{}(i)$ is called the {\it sign} of~$i$. It has {\it decreasing blocks} if $a_i>a_{i+1}$ for any two $a_i,a_{i+1}$ with the same sign. A {\it left-to-right minimum} of $((a_1,a_2,\dotsc{},a_n),\sigma{})$ is $m_i=\mathrm{min}(a_1,\dotsc{},a_i)$, where $1\leq{}i\leq{}n$. For simplicity, we indicate the sign of $a_i$ by writing $a_i^+$ or $a_i^-$.
\end{definition}

\begin{definition}
A {\it build-tree code} is a sequence $c_1c_2\dotsb{}c_n$ of pairs $c_i=(a_i,\sigma{}_i)$ where $0\leq{}a_i\leq{}i-1$ and $\sigma{}_i\in{}\{\pm{}1\}$ such that $(a_i,\sigma{}_i)\neq{}(0,-1)$. For simplicity, we write $a_i^+$ or $a_i^-$ instead of $(a_i,\sigma{}_i)$.
\end{definition}

\begin{definition}
An {\it increasing labeling} of a rooted plane tree $T$ of $n+1$ vertices, also called a {\it simple drawing} or a {\it heap order}, is a bijection $\lambda{}\colon{}T\to{}\{0,1,\dotsc{},n\}$ such that if $u,v\in{}T$ and $u$ is a child of $v$, then $\lambda{}(u)>\lambda{}(v)$. $\lambda{}(v)$ is called the {\it label} of $v$. An {\it increasingly labeled tree} is a rooted plane tree together with an increasing labeling. For simplicity, we identify a vertex with its label if there is no confusion.
\end{definition}

\begin{definition}
Let $(T,\lambda{})$ be an increasingly labeled tree. The {\it right associate} of a vertex $v$ is the sibling $u$ to the right of $v$ with the smallest label such that $\lambda{}(u)>\lambda{}(v)$, and $\lambda{}(u)$ is smaller than the labels of all siblings between $u$ and $v$, if such a $u$ exists. $(T,\lambda{})$ is a {\it Klazar tree} if it satisfies the following property: for any vertex $v$ with a right associate $u$, $v$ is not a leaf, and $\lambda{}(u)$ is larger than the minimum of all labels of children of $v$.
\end{definition}

Figure 1 shows two different linear extensions of the same tree. The tree in the right of Figure 1 is a Klazar tree. The tree in the left is not a Klazar tree, because the vertex $3$ has a right associate $4$, but $3$ is a leaf.

\begin{figure}[h]
\caption{Two increasingly labeled trees}
\centering
\includegraphics[width=0.4\textwidth]{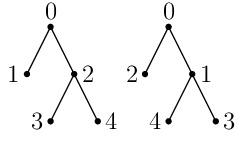}
\end{figure}

The last object is the set of {\it drawings} of rooted plane trees with $n+1$ vertices:

\begin{definition}
A {\it drawing} of a rooted plane tree $T$ is a sequence of rooted plane trees $T_0=\{\text{root}\},T_1,\dotsc{},T_n=T$ such that for each $0\leq{}i\leq{}n-1$, the tree $T_i$ can be obtained from $T_{i+1}$ by removing a leaf together with its pendant edge.
\end{definition}

The main result of this paper is

\begin{theorem}\label{theorem-main}
The following seven sets are in bijection:
\begin{itemize}
\item[(1)] The set of bounded regions in the affine hyperplane arrangement $\mathcal{H}_n\cap{}P$.
\item[(2)] The set of ordered partitions of $[n]$ all of whose left-to-right minima occur at odd locations.
\item[(3)] The set of signed permutations of $[n]$ with decreasing blocks all of whose left-to-right-minima have positive signs.
\item[(4)] The set of build-tree codes of length $n$ such that there is a $v^+$ after (but not necessarily adjacent to) each $v^-$.
\item[(5)] The set of build-tree codes of length $n$ such that there is a $v^+$ before (but not necessarily adjacent to) each $v^-$.
\item[(6)] The set of Klazar trees with $n+1$ vertices.
\item[(7)] The set of drawings of rooted plane trees with $n+1$ vertices.
\end{itemize}
Let $b_n$ be the common cardinality of these sets. Set $b_0=1$. Then, the sequence $\{b_n\}$ has the exponential generating function
\begin{equation*}
\sum_{n=0}^{\infty{}}b_n\frac{x^n}{n!}=\sqrt{\frac{e^x}{2-e^x}}.
\end{equation*}
\end{theorem}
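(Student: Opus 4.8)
The plan is to prove the seven-fold equivalence by fixing a hub and exhibiting explicit invertible maps to and from it, and then to compute the exponential generating function from whichever model makes the record structure most transparent, namely the ordered partitions of item~(2).

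For the bijections I would take the signed permutations of (3) as the hub. The correspondence (2)$\leftrightarrow$(3) is the most elementary: given an ordered partition $(A_1,\dots,A_k)$, write each block in decreasing order, concatenate, and attach the sign $+$ to entries from odd-indexed blocks and $-$ to those from even-indexed blocks. The signs then strictly alternate from block to block, so the maximal constant-sign runs recover the blocks and the decreasing-blocks condition is automatic; since a record element inherits the parity of its block as its sign, ``all left-to-right minima occur at odd locations'' becomes exactly ``all left-to-right minima have positive sign.'' From (3) I would pass to the build-tree code (4) by an insertion code recording a datum $(a_i,\sigma_i)$ for each value, and obtain (5) from (4) by the reflection reversing the plane order of every tree (a reversal on codes), which interchanges the ``$v^+$ after'' and ``$v^+$ before'' conditions. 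The code is then decoded into a tree by the eponymous build-tree algorithm, inserting vertex $i$ as a child of $a_i$ with $\sigma_i$ dictating the side of insertion; this already gives all increasingly labeled plane trees, and the main point is to verify that the code condition is equivalent to the right-associate property defining a Klazar tree.

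Two steps carry the real difficulty. The first is the passage (6)$\leftrightarrow$(7): a drawing is a growth history recorded only up to plane isomorphism, so I would show that among all increasing labelings compatible with a given drawing there is exactly one satisfying the Klazar property, i.e.\ the Klazar trees form a canonical transversal for the fibres of the map forgetting labels and keeping the sequence of plane trees. Getting the right-associate condition to single out one representative among symmetric siblings is the crux. The second is the geometric identification (1)$\leftrightarrow$(3). Since $\mathcal{H}_n$ is central, each region is a cone, and a bounded region of the generic slice $P$ corresponds to a pointed region of $\mathcal{H}_n$ on which $l$ is strictly positive; such a region is determined by the signs of the forms $x_i$ and $x_i+x_j$, and the relations $l_1\gg\dots\gg l_n>0$ impose a dominance order that reads these sign data off as a signed permutation with decreasing blocks, the positivity of $l$ forcing the minima to be positive. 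Checking that precisely the consistent sign vectors so arising are realized, and that they match (3), is the technical heart here.

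For the generating function I would work entirely inside model~(2). Decompose an ordered partition into maximal segments, each beginning at a record block; the segment minima strictly decrease, so the segments are ordered by decreasing minimum, the one containing $1$ coming last. Writing $j_1<\dots<j_r$ for the record positions, the condition that every record lies at an odd position is equivalent, since $j_1=1$, to every non-final segment having an even number of blocks. An ordered set partition into $p$ blocks has its minimum in the first block in a $1/p$ fraction of cases, so with $w=e^x-1$ the exponential generating function of such a segment with $p$ blocks is $\tfrac1p w^p$; summing gives $C(x)=\sum_{p\ge1}\tfrac1p w^p=-\ln(2-e^x)$ for the final segment and $B(x)=\sum_{q\ge1}\tfrac1{2q}w^{2q}=-\tfrac12\ln(1-(e^x-1)^2)$ for the even-length ordinary segments. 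Since the ordinary segments form a set and the minimum-containing segment is distinguished, the min-pointing form of the exponential formula gives
\begin{equation*}
f(x)=1+\int_0^x C'(t)\,e^{B(t)}\,dt.
\end{equation*}
Using $e^{B}=((2-e^x)e^x)^{-1/2}$ and $C'(t)=e^t/(2-e^t)$, the substitution $u=e^t$ reduces the integral to $\int u^{-1/2}(2-u)^{-3/2}\,du=\sqrt{u/(2-u)}$, and evaluating between $u=1$ and $u=e^x$ yields $f(x)=\sqrt{e^x/(2-e^x)}$, as claimed. I expect the bijections, especially the Klazar-tree/drawing transversal and the geometric step, to be where essentially all the work lies; the generating-function computation, once the segment decomposition is in hand, is routine.
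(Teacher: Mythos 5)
Your plan follows the paper's architecture for most of the chain, but one link is concretely wrong as stated. For $(4)\leftrightarrow(5)$ you propose the reflection reversing the plane order of every tree, ``a reversal on codes.'' This does not interchange the two conditions. Take the code $0^+1^+1^-$, which lies in set (5) but not in set (4): it encodes the tree in which the root has children $1,3$ (left to right) and $1$ has child $2$. Its mirror image is the increasingly labeled tree whose code is $0^+1^+0^+$, which contains no $v^-$ at all and hence lies in \emph{both} sets; so the map is not even a bijection between the two sets. The involution that works is different: keep every numeral in its position and reverse the \emph{sequence of signs} attached to the occurrences of each fixed numeral (so $0^+1^+1^-$ goes to $0^+1^-1^+$). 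This per-numeral sign reversal visibly swaps ``a $v^+$ somewhere after each $v^-$'' with ``a $v^+$ somewhere before each $v^-$.''

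You also defer precisely the two steps where the work lives. For $(1)\leftrightarrow(3)$, note that a region of $\mathcal{H}_n$ does not determine a total order on the $|x_i|$ --- only the order between consecutive opposite-sign blocks --- so the object it naturally determines is an ordered partition (up to a global sign), and the decreasing-block signed permutation is a chosen representative; more importantly, your sketch contains no boundedness criterion. The paper gets it by refining $\mathcal{H}_n$ by the $B_n$ arrangement, writing each $B_n$-region as the simplicial cone on $v_i=\sigma(a_1)e_{a_1}+\dotsb+\sigma(a_i)e_{a_i}$, and observing that $l_1\gg\dotsb\gg l_n>0$ forces $f(v_i)$ to have the sign of $\sigma(\min(a_1,\dotsc,a_i))$; that single computation is the whole of $(1)\leftrightarrow(2)$, and something equivalent to it must appear in any completed argument. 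For $(6)\leftrightarrow(7)$ your ``canonical transversal'' statement is exactly the right goal (and is the paper's, following Callan), but the content is the construction (always remove/insert the leftmost eligible leaf) together with the injectivity argument via the lowest common ancestor of the two candidate vertices labeled $n$; as written you have restated the claim rather than proved it.

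Your generating-function computation, on the other hand, is correct and genuinely different from the paper's. The paper reduces to permutations with all records at odd positions, proves $p_n$ has exponential generating function $\sqrt{(1+x)/(1-x)}$ via a recursion and double-factorial identities, and then applies the compositional formula with $e^x-1$. You instead cut the ordered partition at its record blocks into pointed segments (minimum in the first block, generating function $\tfrac1p(e^x-1)^p$), observe that the odd-location condition says exactly that the non-final segments have even length, and integrate $C'e^{B}$. I checked that the segment decomposition is a bijection and that the integral evaluates as you claim; your route avoids the double-factorial manipulations entirely, while the paper's makes the underlying permutation statistic and the value $p_k=((k-1)!!)^2$ or $k!!(k-2)!!$ explicit.
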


The above generating function is due to Klazar \cite{citation-klazar} in a paper that discusses various counting problems of rooted plane trees. The bijections between (5), (6) and (7) are studied by Callan \cite{citation-callan}. Our notations are different from Callan's because we use the top-to-bottom convention for trees in this paper. Callan shows that $b_n$ also equals the number of perfect matchings on the set [2n] in which no even number is matched to a larger odd number. The sequence $\{b_n\}$ begins with
\begin{equation*}
1,1,2,7,35,226,1787,16717,\dotsc{}.
\end{equation*}
This sequence can be found in the On-Line Encyclopedia of Integer Sequences \cite[A014307]{citation-oeis}.

\begin{remark}
The number $b_n$ is related to the following {\em urn model}: one starts with $1$ black ball and $0$ white ball in an urn. In each step, one picks a ball randomly in the urn. If the ball is black, one puts that ball back to the urn together with another white ball. Otherwise, one puts that ball back to the urn together with two more black balls. Suppose that all balls are distinguishable. Then, $b_n$ equals the number of possible histories after $n$ steps. A detailed treatment on urn models can be found in \cite{citation-flajolet}.
\end{remark}

\begin{remark}
The number of bounded regions in $\mathcal{H}_n\cap{}P$ can be obtained by a simple application of the finite field method. However, it takes much more effort to establish a bijective proof.
\end{remark}

Our investigation originates from a {\it latent allocation model} in genomics \cite{citation-pachter}. Maximum likelihood estimation for this statistical model involves finding local maxima of the the function
\begin{equation*}
\prod_{i=1}^n|x_i|^{u_i}\prod_{1\leq{}i<j\leq{}n}|x_i+x_j|^{u_{ij}}
\end{equation*}
on the hyperplane $P'=\{x_1+x_2+\dotsb{}+x_n=1\}$, where $u_i,u_{ij}$ are generic positive integers. By a theorem of Varchenko \cite{citation-varchenko}, the ML degree of the statistical model, i.e. the number of local maxima of the above function equals the number of bounded regions in $\mathcal{H}_n\cap{}P'$. Our hyperplane $P$ can be considered as a deformation of $P'$:
\begin{equation*}
P=\{l_1x_1+l_2x_2+\dotsb{}+l_nx_n=1\},
\end{equation*}
where $l_1,\dotsc{},l_n$ are generic parameters. The number of bounded regions in $\mathcal{H}_n\cap{}P$ gives an upper bound on the number of bounded regions in $\mathcal{H}_n\cap{}P'$, and thus gives an upper bound on the maximum likelihood degree of the latent allocation model. Without loss of generality, we will assume that $l_1\gg{}l_2\gg{}\dotsb{}\gg{}l_n>0$.

Our hyperplane arrangement $\mathcal{H}_n$ is refined by the well-studied hyperplane arrangement of type $B_n$:
\begin{equation*}
\mathcal{B}_n = \{x_i,1\leq{}i\leq{}n\}\cup{}\{x_i\pm{}x_j,1\leq{}i<j\leq{}n\}.
\end{equation*}

Section \ref{section-bn} discusses the analogous problem for $\mathcal{B}_n$, and it shows that the bounded regions in $\mathcal{B}_n\cap{}P$ are in bijection with increasingly labeled trees with $n+1$ vertices. Based on this, Section \ref{section-hn} proves our main result, Theorem \ref{theorem-main}.

\section{Bounded regions in a slice of $\mathcal{B}_n$}\label{section-bn}

First, we consider the regions of the central hyperplane arrangement $\mathcal{B}_n$. The hyperplanes $x_i$ in $\mathcal{B}_n$ divide $\mathbb{R}^n$ into $2^n$ orthants. In each orthant, the hyperplanes $x_i\pm{}x_j$ divides the orthant into $n!$ regions, one for each total ordering of $|x_1|,\dotsc{},|x_n|$. Thus, for each signed permutation $((a_1,a_2,\dotsc{},a_n),\sigma{})$, we can associate it with a region $R$ of $\mathcal{B}_n$:
\begin{equation*}
R = \{(x_1,\dotsc{},x_n)\in{}\mathbb{R}^n:|x_{a_1}|>|x_{a_2}|>\dotsb{}>|x_{a_n}|,\mathrm{sgn}(x_i)=\sigma{}(i)\}.
\end{equation*}
Clearly, this is a bijection between regions of $\mathcal{B}_n$ and signed permutations of $[n]$.

\begin{lemma}\label{lemma-bn-regions}
Let $R$ be a region of $\mathcal{B}_n$. Let $((a_1,a_2,\dotsc{},a_n),\sigma{})$ be the corresponding signed permutation. Then
\begin{itemize}
\item[(a)] The polyhedron $R\cap{}P$ is nonempty and bounded if all left-to-right minima of $(a_1,a_2,\dotsc{},a_n)$ have positive signs.
\item[(b)] The polyhedron $R\cap{}P$ is empty if all left-to-right minima of $(a_1,a_2,\dotsc{},a_n)$ have negative signs.
\item[(c)] The polyhedron $R\cap{}P$ is nonempty and unbounded if neither of the above holds.
\end{itemize}
\end{lemma}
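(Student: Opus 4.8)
The plan is to pass to coordinates adapted to the region $R$, which turns $R\cap P$ into the slice of a simplicial cone by an affine hyperplane, and then to read off nonemptiness and boundedness from the sign of one linear functional on the extreme rays of that cone. Concretely, inside $R$ substitute $y_i=|x_{a_i}|$, so that $x_{a_i}=\sigma(a_i)\,y_i$. Under this substitution $R$ becomes the open simplicial cone $C=\{y_1>y_2>\dots>y_n>0\}$, and the defining equation of $P$ becomes $\ell(y):=\sum_{i=1}^n c_i\,y_i=1$ with $c_i=l_{a_i}\sigma(a_i)$. The closed cone $\bar C$ is the nonnegative span of the extreme rays $v_k=e_1+\dots+e_k$ for $1\le k\le n$, because every $y\in\bar C$ equals $\sum_k (y_k-y_{k+1})v_k$ with $y_{n+1}=0$ and nonnegative coefficients, and these coefficients are strictly positive precisely on the open cone $C$. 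Thus the entire lemma reduces to understanding $\ell$ on the $v_k$, and in particular $R\cap P$ equals the relative interior of the (possibly empty or unbounded) slice $\bar C\cap\{\ell=1\}$.

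The heart of the argument, and the step I expect to be the main obstacle, is computing the sign of $\ell(v_k)=\sum_{i=1}^k l_{a_i}\sigma(a_i)$. The hypothesis $l_1\gg l_2\gg\dots\gg l_n>0$ should be read as a domination condition, namely that each $l_j$ exceeds the sum of all later weights $l_i$. Among the indices $a_1,\dots,a_k$, the one carrying the largest weight is the smallest index, which is exactly the left-to-right minimum $m_k=\min(a_1,\dots,a_k)$. Domination then gives $l_{m_k}>\sum l_a$ over the remaining $a\in\{a_1,\dots,a_k\}$, so the single term $l_{m_k}\sigma(m_k)$ controls the sign of the whole sum. I would record this as the key claim $\mathrm{sgn}\,\ell(v_k)=\sigma(m_k)$ for every $k$, which converts the arithmetic of the $l_i$ into the combinatorics of left-to-right minima; each of the three cases then becomes a statement purely about the signs $\ell(v_1),\dots,\ell(v_n)$.

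With the claim in hand the three cases are routine convex geometry. If all left-to-right minima have sign $+1$, then $\ell>0$ on every extreme ray and hence on all of $\bar C\setminus\{0\}$, so $\bar C\cap\{\ell=1\}$ is the bounded simplex with vertices $v_k/\ell(v_k)$, and its relative interior $R\cap P$ is nonempty and bounded. If all left-to-right minima have sign $-1$, then $\ell<0$ on $\bar C\setminus\{0\}$, so $\ell=1$ has no solution in $C$ and $R\cap P=\varnothing$. In the mixed case I would choose $j$ with $\ell(v_j)>0$ and $k$ with $\ell(v_k)<0$; rescaling an interior point near $v_j$ produces $p\in C$ with $\ell(p)=1$, giving nonemptiness, while the nonnegative combination $w=-\ell(v_k)\,v_j+\ell(v_j)\,v_k\in\bar C\setminus\{0\}$ satisfies $\ell(w)=0$.

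Unboundedness then follows from the ray $\{p+tw:t\ge 0\}$, which stays on $\{\ell=1\}$ since $\ell(p+tw)=1+t\,\ell(w)=1$, and stays in $C$ because adding a vector of $\bar C$ to a point of $C$ preserves the strict inequalities defining $C$: writing $(p_i+tw_i)-(p_{i+1}+tw_{i+1})=(p_i-p_{i+1})+t(w_i-w_{i+1})$, the first difference is strictly positive and the second is nonnegative, and likewise $p_n+tw_n\ge p_n>0$. The one point demanding care throughout is exactly this stability of $C$ under adding closure vectors; it is immediate here from the explicit inequalities, and it is what makes both the bounded case (the slice is the full simplex interior) and the unbounded case work.
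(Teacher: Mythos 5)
Your proposal is correct and follows essentially the same route as the paper: after your change of coordinates the extreme rays $v_k=e_1+\dots+e_k$ are exactly the paper's generators $\sigma(a_1)e_{a_1}+\dots+\sigma(a_k)e_{a_k}$ of $R$, your key claim $\mathrm{sgn}\,\ell(v_k)=\sigma(m_k)$ is the paper's observation that $f(v_k)$ has the sign of $\sigma(\min(a_1,\dots,a_k))$, and the three cases (bounded simplex, empty slice, point plus recession ray) are handled identically. The extra care you take with open versus closed cones and with stability of $C$ under adding closure vectors is a welcome tightening but not a different argument.
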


\begin{proof}
Let $e_1,e_2,\dotsc{},e_n$ be the unit vectors $(1,0,\dotsc{},0)$, $(0,1,\dotsc{},0)$, $\dotsc{}$, $(0,0,\dotsc{},1)$ in $\mathbb{R}^n$. Then, the region $R$ is the cone spanned over $\mathbb{R}_{\geq{}0}$ by the following $n$ vectors:
\begin{align*}
v_1 &= \sigma{}(a_1)e_{a_1},\\
v_2&=\sigma{}(a_1)e_{a_1}+\sigma{}(a_2)e_{a_2},\\
&\dotsb{},\\
v_n&=\sigma{}(a_1)e_{a_1}+\dotsb{}+\sigma{}(a_n)e_{a_n}.
\end{align*}
Let $L_i$ be the ray $\mathbb{R}_{\geq{}0}(v_i)$. Fix the linear form $f(x) = l_1x_1+\dotsb{}+l_nx_n$ on $\mathbb{R}^n$. Then,
\begin{equation*}
f(v_i) = \sigma{}(a_1)l_{a_1}+\dotsb{}+\sigma{}(a_i)l_{a_i}.
\end{equation*}
Since $l_1\gg{}\dotsb{}\gg{}l_n>0$, $f(v_i)$ has the same sign as $\sigma{}(\min{}(a_1,\dotsc{},a_i))$.

(a) By assumption, each left-to-right minima $\min{}(a_1,\dotsc{},a_i)$ has positive sign. Therefore, each $f(v_i)$ is positive. Hence, $P=\{f(x)=1\}$ intersects the ray $L_i$ at $v_i/f(v_i)$. Then, $R\cap{}P$ is the simplex with vertices $v_1/f(v_1),\dotsc{},v_n/f(v_n)$, which is nonempty and bounded.

(b) Similarly, each $f(v_i)$ is negative. Then, $f$ is negative on $R$. Thus, $P=\{f(x)=1\}$ does not intersect $R$.

(c) In this case, some $f(v_i)$ are positive and the others are negative. Say $f(v_i)>0$ and $f(v_j)<0$. Then, $P=\{f(x)=1\}$ intersects $L_i$ at $v_i/f(v_i)$. Moreover, $f(f(v_i)v_j-f(v_j)v_i))=0$. Hence, $R\cap{}P$ contains the affine ray $v_i/f(v_i)+\mathbb{R}^+(f(v_i)v_j-f(v_j)v_i))$. Thus, $R\cap{}P$ is nonempty and unbounded.
\end{proof}

\begin{theorem}\label{theorem-bn}
The following four sets are in bijection:
\begin{itemize}
\item[(1)] The set of bounded regions in $\mathcal{B}_n\cap{}P$.
\item[(2)] The set of signed permutations of $[n]$ all of whose left-to-right minima have positive signs.
\item[(3)] The set of build-tree codes of length $n$.
\item[(4)] The set of increasingly labeled trees with $n+1$ vertices.
\end{itemize}
Moreover, the common cardinality of these sets equals $(2n-1)!!=1\cdot{}3\cdot{}5\dotsb{}(2n-1)$.
\end{theorem}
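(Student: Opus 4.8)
The plan is to extract the cardinality $(2n-1)!!$ from set (3), where it is essentially free, and then to link all four sets by explicit bijections: I will treat (1)$\leftrightarrow$(2) as an immediate consequence of Lemma~\ref{lemma-bn-regions}, and construct (3)$\leftrightarrow$(4) and (2)$\leftrightarrow$(3) by sequential insertion, the former inserting vertices into a tree and the latter inserting values into a word. Since both insertion processes make exactly $2i-1$ choices at step $i$, they will match the codes $c_i$ termwise, so composing the bijections yields a common cardinality $\prod_{i=1}^{n}(2i-1)=(2n-1)!!$.

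First, (1)$\leftrightarrow$(2) requires no new argument. Under the bijection already established between regions of $\mathcal{B}_n$ and signed permutations, a region $R$ contributes a bounded region of $\mathcal{B}_n\cap P$ precisely when $R\cap P$ is nonempty and bounded, which by parts (a)--(c) of Lemma~\ref{lemma-bn-regions} happens if and only if all left-to-right minima of $(a_1,\dots,a_n)$ carry positive signs. Thus the bounded regions are in bijection with the signed permutations in (2). Next, I count (3) directly: for each $i$ the pair $c_i=(a_i,\sigma_i)$ ranges over $2i$ possibilities ($i$ choices of $a_i\in\{0,\dots,i-1\}$ times two signs) minus the single excluded pair $(0,-1)$, giving $2i-1$ independent choices, whence $|(3)|=\prod_{i=1}^{n}(2i-1)=(2n-1)!!$.

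For (3)$\leftrightarrow$(4) I build an increasingly labeled tree by inserting the vertices $1,2,\dots,n$ in order, letting $c_i$ record the slot into which $i$ is placed. A plane tree on $i$ vertices has exactly $(i-1)+i=2i-1$ slots for a new leaf, obtained by summing $d_v+1$ over all vertices and using $\sum_v d_v=i-1$; these split into $i$ leftmost-child slots and $i-1$ right-sibling slots. I will read $\sigma_i=+1$ as ``insert $i$ as the first child of $a_i$'' (any $a_i\in\{0,\dots,i-1\}$) and $\sigma_i=-1$ as ``insert $i$ as the immediate right sibling of $a_i$'' (which forces $a_i\neq 0$, exactly the excluded pair). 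Because the parent of $i$ always has a smaller label, the increasing property is automatic, so every code yields a valid tree. The step I expect to be most delicate is verifying that this insertion map is a bijection; I will exhibit the inverse by repeated leaf deletion, removing the vertex $n$ (necessarily a leaf of largest label), recording whether it was a first child or a right sibling and of which vertex to recover $c_n$, and recursing. The content is checking that the $2i-1$ slots are genuinely distinct and exhaust all leaf positions, so that deletion of $n$ returns to a uniquely determined smaller tree; the slot-counting identity above is what makes the map simultaneously injective and surjective.

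Finally, for (2)$\leftrightarrow$(3) I run the parallel insertion of the \emph{values} $1,2,\dots,n$ into a growing word, assigning each value a sign as it is placed. When $v$ is inserted into the current arrangement of $\{1,\dots,v-1\}$, the key observation is that, since every later-inserted value exceeds $v$, the value $v$ is a left-to-right minimum of the final signed permutation if and only if it occupies the leftmost gap. Hence inserting $v$ at the front forces sign $+1$ (one position, one admissible sign), while each of the other $v-1$ gaps admits either sign, yielding $1+2(v-1)=2v-1$ choices at step $v$ and matching the $2v-1$ options for $c_v$. This gives the bijection (2)$\leftrightarrow$(3), and composing the three bijections shows that all four sets are in bijection, each of cardinality $(2n-1)!!$.
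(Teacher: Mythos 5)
Your proposal is correct and follows essentially the same route as the paper: (1)$\leftrightarrow$(2) read off from Lemma~\ref{lemma-bn-regions}, and (2)$\leftrightarrow$(3), (3)$\leftrightarrow$(4) built by sequential insertion with $2i-1$ choices at step $i$, yielding $(2n-1)!!$. The only differences are cosmetic: you assign signs absolutely at each gap rather than relative to the sign of the reference element $j$ as the paper does, and you make the slot-counting identity $\sum_v(d_v+1)=2i-1$ explicit, which the paper leaves implicit.
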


\begin{proof}
$(1)\leftrightarrow(2)$. The regions of $\mathcal{B}_n\cap{}P$ are exactly $R\cap{}P$ for region $R$ of $\mathcal{B}_n$ such that $R\cap{}P$ is nonempty. Therefore, it follows from Lemma \ref{lemma-bn-regions} that the cardinalities of the sets (1) and (2) are equal.

$(2)\leftrightarrow(3)$. We construct a bijection between the set of build-tree codes of length $n$ and the set of signed permutations $[n]$ all of whose left-to-right minima have positive signs. Given a build-tree code $c_1c_2\dotsb{}c_n$, we construct a signed permutation. We start from the empty signed permutation. In each step, we look at $c_i$ and add one element to the signed permutation:
\begin{itemize}
\item[(i)] If $c_i=0^+$, then add $i$ to the beginning with positive sign.
\item[(ii)] If $c_i=j^+$ for $j>0$, then add $i$ immediately after $j$ with the opposite sign from $j$.
\item[(iii)] If $c_i=j^-$ for $j>0$, then add $i$ immediately after $j$ with the same sign as~$j$.
\end{itemize}
We obtain a signed permutation of $[n]$ in this way. In each step, if (ii) or (iii) hold, the left-to-right minima stay the same. If (i) holds, then $i$ becomes a new left-to-right minimum, and we construct it to have the positive sign. Thus, the signed permutation we constructed has the property that all of its left-to-right minima have positive signs. On the other hand, given a signed permutation of $[n]$ all of whose left-to-right minima have positive signs, we can reverse the construction and obtain a build-tree code. It is straightforward to verify that this gives a bijection.

$(3)\leftrightarrow{}(4)$. See Callan \cite{citation-callan}. We construct a bijection between the set of build-tree codes of length $n$ and the set of increasingly labeled trees with $n+1$ vertices. Given a build-tree code $c_1c_2\dotsb{}c_n$, we construct an increasingly labeled tree. We start from the rooted plane tree with no non-root vertices. In each step, we look at $c_i$ and add one leaf to the tree with label $i$:
\begin{itemize}
\item[(i)] If $c_i=0^+$, then add $i$ as the leftmost child of the root.
\item[(ii)] If $c_i=j^+$ for $j>0$, then add $i$ as the leftmost child of vertex $j$.
\item[(iii)] If $c_i=j^-$ for $j>0$, then add $i$ as the immediate right neighbor of $j$.
\end{itemize}
On the other hand, given an increasingly labeled tree with $n+1$ vertices, we can reverse the construction and obtain a build-tree code. It is straightforward to verify that this gives a bijection.

It is easy to see that there are $(2n-1)!!$ build-tree codes of length $n$, because each $c_i$ has exactly $2i-1$ independent choices.
\end{proof}

\begin{example}\label{example-signed-permutation}
Table 1 illustrates how we obtain a signed permutation of $\{1,2,3,4,5,6\}$ from the build-tree code $0^+1^-1^+1^+0^+3^+$ with the construction above.

\begin{table}[h!]
\caption{Constructing a signed permutation from a build-tree code}
\begin{tabular}{|c|c|c|c|}
\toprule{}
Step & Build-tree code & Rule applied & Signed permutation \\
\midrule{}
0 & & & \\
\midrule{}
1 & $0^+$ & (i) beginning, $+$ sign & $1^+$ \\
\midrule{}
2 & $0^+1^-$ & (iii) after $1$, same sign & $1^+2^+$ \\
\midrule{}
3 & $0^+1^-1^+$ & (ii) after $1$, opposite sign & $1^+3^-2^+$ \\
\midrule{}
4 & $0^+1^-1^+1^+$ & (ii) after $1$, opposite sign & $1^+4^-3^-2^+$ \\
\midrule{}
5 & $0^+1^-1^+1^+0^+$ & (i) beginning, $+$ sign & $5^+1^+4^-3^-2^+$ \\
\midrule{}
6 & $0^+1^-1^+1^+0^+3^+$ & (ii) after $3$, opposite sign & $5^+1^+4^-3^-6^+2^+$ \\
\cline{1-4}
\end{tabular}
\end{table}

\end{example}

\begin{example}\label{example-tree}
Table 2 illustrates how we obtain an increasingly labeled tree with $7$ vertices from the build-tree code $0^+1^+1^+1^-0^+3^+$ with the construction above.

\begin{table}[h!]
\caption{Constructing an increasingly labeled tree from a build-tree code}
\begin{tabular}{|c|c|c|c|}
\toprule{}
Step & Build-tree code & Rule applied & Tree \\
\midrule{}
\parbox[c]{1cm}{\centering{}0} & & & \parbox[c]{4cm}{\centering{}\includegraphics{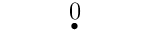}}\\
\midrule{}
\parbox[c]{1cm}{\centering{}1} & \parbox[c]{3cm}{\centering{}$0^+$} & \parbox[c]{2.5cm}{\centering{}(i) leftmost child of $0$} & \parbox[c]{4cm}{\centering{}\includegraphics{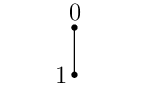}}\\
\midrule{}
\parbox[c]{1cm}{\centering{}2} & \parbox[c]{3cm}{\centering{}$0^+1^+$} & \parbox[c]{2.5cm}{\centering{}(ii) leftmost child of $1$} & \parbox[c]{4cm}{\centering{}\includegraphics{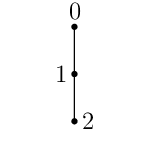}}\\
\midrule{}
\parbox[c]{1cm}{\centering{}3} & \parbox[c]{3cm}{\centering{}$0^+1^+1^+$} & \parbox[c]{2.5cm}{\centering{}(ii) leftmost child of $1$} & \parbox[c]{4cm}{\centering{}\includegraphics{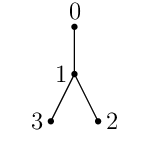}}\\
\midrule{}
\parbox[c]{1cm}{\centering{}4} & \parbox[c]{3cm}{\centering{}$0^+1^+1^+1^-$} & \parbox[c]{2.5cm}{\centering{}(iii) right neighbor of $1$} & \parbox[c]{4cm}{\centering{}\includegraphics{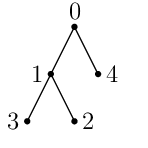}}\\
\midrule{}
\parbox[c]{1cm}{\centering{}5} & \parbox[c]{3cm}{\centering{}$0^+1^+1^+1^-0^+$} & \parbox[c]{2.5cm}{\centering{}(i) leftmost child of $0$} & \parbox[c]{4cm}{\centering{}\includegraphics{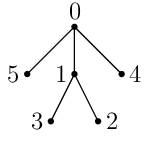}}\\
\midrule{}
\parbox[c]{1cm}{\centering{}6} & \parbox[c]{3cm}{\centering{}$0^+1^+1^+1^-0^+3^+$} & \parbox[c]{2.5cm}{\centering{}(ii) leftmost child of $3$} & \parbox[c]{4cm}{\centering{}\includegraphics{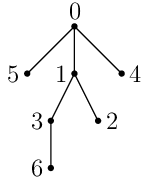}}\\
\cline{1-4}
\end{tabular}
\end{table}

\end{example}

\begin{remark}
Stanley \cite[Section 5.1]{citation-stanley2} computes the characteristic polynomial for the hyperplane arrangement $\mathcal{B}_n$. The number $(2n-1)!!$ is the signed constant term of the characteristic polynomial.
\end{remark}

\section{Bounded regions in a slice of $\mathcal{H}_n$}\label{section-hn}

First, we consider the regions of the central hyperplane arrangement $\mathcal{H}_n$. Let $(A_1,A_2,\dotsc{},A_k)$ be an ordered partition of $[n]$. We define a cone in $\mathbb{R}^n$ by
\begin{align*}
R^+ = \{(x_1,\dotsc{},x_n)\in{}\mathbb{R}^n\colon{}&x_i>0\text{ for }i\in{}A_j\text{ for odd }j,\\
&x_i<0\text{ for }i\in{}A_j\text{ for even }j,\\
&|x_{i_1}|>|x_{i_2}|\text{ for }i_1\in{}A_{j},i_2\in{}A_{j+1},1\leq{}j\leq{}k-1\}.
\end{align*}
Equivalently, the $3$rd condition above can be replaced by the condition that $x_{i_1}+x_{i_2}$ has the same sign as $(-1)^{j+1}$. Let $R^-=-R^+$.

\begin{lemma}\label{lemma-two-to-one}
There is a $2$ to $1$ map from the set of regions of $\mathcal{H}_n$ to the set of ordered partitions of $[n]$.
\end{lemma}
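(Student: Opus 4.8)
The plan is to show that the two cones $R^{+}$ and $R^{-}$ attached to an ordered partition $(A_1,\dots,A_k)$ account for \emph{all} regions of $\mathcal{H}_n$, each partition being realized by exactly the pair $\{R^{+},R^{-}\}$. First I would check that $R^{+}$ (hence $R^{-}=-R^{+}$) really is a single region. On $R^{+}$ every defining form of $\mathcal{H}_n$ has constant sign: each $x_i$ has the sign $(-1)^{j+1}$ dictated by the block $A_j$ containing $i$; for $i_1,i_2$ in the same block $x_{i_1}+x_{i_2}$ has that common sign; and for $i_1\in A_j$, $i_2\in A_{j'}$ with $j<j'$ the third defining condition (together with transitivity of the between-block inequalities) forces $|x_{i_1}|>|x_{i_2}|$, so $x_{i_1}+x_{i_2}$ takes the sign of $x_{i_1}$. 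Since $R^{+}$ is a nonempty open cone---one exhibits a point by giving the coordinates in $A_j$ the sign $(-1)^{j+1}$ and magnitudes strictly decreasing in $j$---on which no hyperplane of $\mathcal{H}_n$ vanishes, it lies in one region; and as it is cut out by exactly the sign conditions defining that region, it equals it.

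Next I would define the map $\phi$ from regions to ordered partitions, which carries the real content. Given a region $R$, choose $x\in R$ with pairwise distinct coordinate absolute values, sort the coordinates by strictly decreasing $|x_i|$, and let the blocks of $\phi(R)$ be the maximal runs of constant sign in this sequence, ordered from largest to smallest absolute value. The crux is that $\phi(R)$ is independent of $x$. Within $R$ the sign of each $x_{i_1}+x_{i_2}$ is constant, so for coordinates $i_1,i_2$ of \emph{opposite} sign the relative order of $|x_{i_1}|$ and $|x_{i_2}|$ is the same at every point of $R$; since two equal-sign coordinates lie in a common run exactly when no opposite-sign coordinate has absolute value strictly between theirs, the run partition depends only on these fixed opposite-sign comparisons (ties, which can occur only between equal-sign coordinates, never split a run). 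I expect this invariance to be the principal obstacle, as it is what promotes the pointwise recipe to a genuine function on regions.

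Finally I would identify the fibers. Because the blocks of $R^{+}_{\pi}$ alternate in sign, each block is already a maximal constant-sign run, so $\phi(R^{+}_{\pi})=\pi$; the antipode $R^{-}_{\pi}$ flips every sign but not the positions of the sign changes, whence $\phi(R^{-}_{\pi})=\pi$ as well, while $R^{+}_{\pi}\neq R^{-}_{\pi}$ since their largest-magnitude coordinates have opposite signs. Conversely, if $\phi(R)=(A_1,\dots,A_k)$ then within each $A_j$ all coordinates share a sign, consecutive blocks have opposite signs, and $\min_{A_j}|x|>\max_{A_{j+1}}|x|$; this fixes the sign of every $x_i$ and of every $x_{i_1}+x_{i_2}$, that is, the complete sign vector of $R$, which must therefore coincide with that of $R^{+}_{\pi}$ or of $R^{-}_{\pi}$ according as the top block is positive or negative. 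Hence $\phi^{-1}(\pi)=\{R^{+}_{\pi},R^{-}_{\pi}\}$ has exactly two elements, and $\phi$ is the desired $2$-to-$1$ map.
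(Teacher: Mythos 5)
Your proposal is correct and follows essentially the same route as the paper: both verify that $R^{+}$ and $R^{-}$ are genuine regions of $\mathcal{H}_n$ (all forms in $\mathcal{H}_n$ have constant sign on them) and then show every region arises this way by sorting a generic point's coordinates by decreasing absolute value and grouping maximal constant-sign runs. The additional care you take with the well-definedness of $\phi$ (independence of the chosen point) and with explicitly computing the fibers is detail the paper leaves implicit, not a different argument.
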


\begin{proof}
First, we notice that $R^+$ and $R^-$ are defined by inequalities involving only the linear forms in $\mathcal{H}_n$. Also, all signs of $x_i$ are implied by the defining inequalities of $R^+$ and $R^-$. These inequalities also imply the order of the $|x_i|$ except those $i$ in the same block. Since the $x_i$ with $i$ in the same block have the same sign, all signs of $x_i+x_j$ are implied by the defining inequalities. Therefore, $R^+$ and $R^-$ are indeed regions of $\mathcal{H}_n$.

On the other hand, given a generic point $(x_1,\dotsc{},x_n)\in{}\mathbb{R}^n$, we claim that it lies in a region of the form $R^+$ or $R^-$. Indeed, we order the $x_i$ by their absolute value: $|x_{p_1}|>|x_{p_2}|>\dotsb{}>|x_{p_n}|$. So, we get a permutation $(p_1,\dotsc{},p_n)$ of $[n]$. Then, we group together consecutive segments of the $p_i$ such that the $x_{p_i}$ has the same sign. In this way, we get an ordered partition of $[n]$. It follows that the point lies in $R^+$ or $R^-$, depending on the sign of $x_{p_1}$. Thus, we get a $2$ to $1$ correspondence from the set of regions of $\mathcal{H}_n$ to the set of ordered partitions of $[n]$.
\end{proof}

\begin{lemma}\label{lemma-hn-regions}
Let $(A_1,A_2,\dotsc{},A_k)$ be an ordered partition of $[n]$. Let $R^+$, $R^-$ be the two corresponding regions of $\mathcal{H}_n$. Then
\begin{itemize}
\item[(a)] If $(A_1,A_2,\dotsc{},A_k)$ has all left-to-right minima at odd locations, then $R^+\cap{}P$ is nonempty and bounded, and $R^-\cap{}P$ is empty.
\item[(b)] Otherwise, both $R^+\cap{}P$ and $R^-\cap{}P$ are nonempty and unbounded.
\end{itemize}
\end{lemma}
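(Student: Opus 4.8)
The plan is to reduce everything to the type-$B_n$ computation already carried out in Lemma \ref{lemma-bn-regions}, exploiting that $\mathcal{H}_n$ is refined by $\mathcal{B}_n$. Since every region of $\mathcal{H}_n$ is a union of closures of regions of $\mathcal{B}_n$, I would first write $\overline{R^+}=\bigcup_\pi\overline{R_\pi}$, where $\pi$ runs over the signed permutations whose $\mathcal{B}_n$-region $R_\pi$ is contained in $R^+$. Intersecting with $P$ gives $\overline{R^+}\cap P=\bigcup_\pi(\overline{R_\pi}\cap P)$, a finite union. Because a finite union of polyhedra is bounded exactly when each piece is bounded, is empty exactly when each piece is empty, and is unbounded exactly when some piece is unbounded, the trichotomy for $R^+\cap P$ is governed piece-by-piece by Lemma \ref{lemma-bn-regions}.

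Next I would make the indexing explicit. A region $R_\pi$ for $\pi=((a_1,\dots,a_n),\sigma)$ lies in $R^+$ precisely when the signs are forced by the block parities, $\sigma(i)=(-1)^{j+1}$ for $i\in A_j$, and the order $|x_{a_1}|>\dots>|x_{a_n}|$ refines the block order, i.e.\ elements of $A_j$ precede elements of $A_{j'}$ whenever $j<j'$, with arbitrary order inside each block. For such a $\pi$, a left-to-right minimum $\min(a_1,\dots,a_t)$ is an element of $[n]$ whose sign is $(-1)^{j+1}$ when it lies in $A_j$; thus it is positive iff it sits in an odd block and negative iff it sits in an even block. By Lemma \ref{lemma-bn-regions}, $R_\pi\cap P$ is bounded and nonempty iff all of its record elements lie in odd blocks, and is empty iff they all lie in even blocks.

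The heart of the argument is the equivalence: the ordered partition $(A_1,\dots,A_k)$ has all its left-to-right minima at odd locations iff every refining $\pi$ has all left-to-right minima positive. For one direction, note that for each $i$ the ordered-partition minimum $m_i=\min(A_1\cup\dots\cup A_i)$ is, in every refinement, a genuine record of $\pi$ (it is smaller than everything listed before it, independently of the within-block order); hence if some $m_i$ lies in an even block it is a negative record in every $\pi$. For the converse I would argue that any negative record produced by some refinement must be an element $a\in A_j$ with $j$ even and $a<m_{j-1}$ (since the refinement places all of $A_1,\dots,A_{j-1}$ before $a$); but then $m_j\le a<m_{j-1}$, so block $j$ creates a new left-to-right minimum $m_j\in A_j$ at the even location $j$. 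This \emph{intermediate records} step is the main obstacle, since records inside a block depend on the chosen within-block order, and I must show that no bad within-block record can appear unless an ordered-partition minimum already sits at an even location.

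Finally I would assemble the cases. In case (a), every refinement has only positive records, so every piece $R_\pi\cap P$ is bounded and nonempty, making $R^+\cap P$ bounded and nonempty; moreover the first listed element $a_1\in A_1$ is always a positive record, so no refinement can have all records negative, and $R^+\cap P$ is never empty. In case (b) some $m_i$ lies in an even block, giving a negative record together with the positive record $a_1$, so every piece is unbounded and nonempty, whence $R^+\cap P$ is unbounded and nonempty. The statements about $R^-=-R^+$ follow from the sign-reversing symmetry $x\mapsto -x$, which flips the sign of every record: in case (a) all records of the refinements of $R^-$ become negative, so $R^-\cap P$ is empty, while in case (b) $R^-$ again has mixed records and is unbounded and nonempty.
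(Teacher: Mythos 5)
Your proposal is correct and follows essentially the same route as the paper: decompose $R^{\pm}$ into the $\mathcal{B}_n$-regions refining it and apply Lemma \ref{lemma-bn-regions} piecewise, using the sign pattern $(-1)^{j+1}$ on block $A_j$ to translate ``left-to-right minima at odd locations'' into ``all records positive.'' Your explicit treatment of the intermediate records created by the within-block orderings (showing a bad record forces some $m_j$ into an even block) is a detail the paper passes over silently, but it is the same argument, not a different one.
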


\begin{proof}
Since $\mathcal{B}_n$ refines $\mathcal{H}_n$, a region of $\mathcal{H}_n\cap{}P$ is nonempty (resp. unbounded) if and only if it contains a nonempty (resp. unbounded) region of $\mathcal{B}_n\cap{}P$. From the proof of Lemma \ref{lemma-two-to-one}, $R^+$ (resp. $R^-$) contains exactly the regions in $\mathcal{B}_n$ corresponding to signed permutations $((a_1,\dotsc{},a_n),\sigma{})$ such that $\sigma{}(a_1)=1$, (resp. $\sigma{}(a_1)=-1$) and $(A_1,A_2,\dotsc{},A_k)$ can be obtained from $(a_1,\dotsc{},a_n)$ by grouping together consecutive elements with the same signs. 

(a) Let $(x_1,\dotsc{},x_n)\in{}R^+$. Since the $x_i$ for $i\in{}A_j$ have sign $(-1)^{j-1}$, an odd location in $(A_1,A_2,\dotsc{},A_k)$ corresponds to elements in $((a_1,\dotsc{},a_n),\sigma{})$ with positive signs. Therefore, $(A_1,A_2,\dotsc{},A_k)$ has all left-to-right minima at odd locations if and only if all left-to-right minima of $((a_1,\dotsc{},a_n),\sigma{})$ have positive signs. Therefore, $R^+$ contains only regions of type (a) in Lemma \ref{lemma-bn-regions}, which are nonempty and bounded. Thus, $R^+\cap{}P$ is nonempty and bounded. Similarly, $R^-$ contains only regions of type (b) in Lemma \ref{lemma-bn-regions}. Thus, $R^-\cap{}P$ is empty.

(b) Similarly, both $R^+$ and $R^-$ contains only regions of type (c) in Lemma \ref{lemma-bn-regions}. Thus, both $R^+\cap{}P$ and $R^-\cap{}P$ are nonempty and unbounded.
\end{proof}

Now we prove our main result.

\begin{proof}[Proof of Theorem \ref{theorem-main}]
$(1)\leftrightarrow{}(2)$. The regions of $\mathcal{H}_n\cap{}P$ are exactly $R\cap{}P$ for regions $R$ of $\mathcal{H}_n$ such that $R\cap{}P$ is nonempty. Therefore, it follows from Lemma \ref{lemma-hn-regions} that the cardinalities of the sets (1) and (2) are equal.

$(2)\leftrightarrow{}(3)$. For each ordered partition $(A_1,A_2,\dotsc{},A_k)$, we construct a signed permutation with decreasing blocks by writing elements of each $A_i$ in decreasing order and concatenating them to form a permutation. The signs of the elements of $A_i$ is $(-1)^{i-1}$. For example, the ordered partition $(15,246,3)$ is sent to $5^+1^+6^-4^-2^-3^+$. It is clear that ordered partitions of $[n]$ all of whose left-to-right minima occur at odd locations are in bijection with signed permutations of $[n]$ with decreasing blocks all of whose left-to-right minima have positive signs.

$(3)\leftrightarrow{}(4)$.  A signed permutation fails to have decreasing blocks if and only if there are two adjacent elements $u,v$ with the same sign such that $u<v$. In other words, $v$ is added after $u$ with the same sign, and no more element is added after $u$ afterwards. Under the bijection described in the proof of Theorem \ref{theorem-bn}, this translates exactly to the condition that there is no $u^+$ after some $u^-$. Thus, the bijection sends signed permutations with decreasing blocks all of whose left-to-right minima have positive signs to build-tree codes such that there is a $v^+$ after each $v^-$, and vice versa.

$(4)\leftrightarrow{}(5)$. Given a build-tree code, we keep the numerals in the build-tree code, and reverse the order of the signs over each fixed numeral. For example, $0^+1^+1^-2^+2^+2^-$ goes to $0^+1^-1^+2^-2^+2^+$. In this way, the build-tree codes such that there is a $v^+$ before each $v^-$ are sent exactly to the build-tree codes such that there is a $v^+$ after each $v^-$, and vice versa.

$(5)\leftrightarrow{}(6)$. An increasingly labeled tree $(T,\lambda{})$ can be considered as a process of constructing the tree $T$ by adding vertices in the order determined by $\lambda{}$. The right associate of a vertex $v$, if it exists, is the first vertex added as the immediate right neighbor of $v$. Under the bijection described in the proof of Theorem \ref{theorem-bn}, the label of the right associate of $v$ corresponds to the location of the fist appearance of $v^-$ in the build-tree code. The condition that there is a $v^+$ before each $v^-$ translates to the condition that the right associate of $v$, if it exists, is added after at least one child of $v$. This is exactly the defining condition for Klazar trees. Thus, the bijection sends Klazar trees to build-tree codes such that there is a $v^+$ before each $v^-$, and vice versa.

$(6)\leftrightarrow{}(7)$. See Callan \cite{citation-callan}. We elaborate the proof for completion. Given a drawing $T_0,T_1,\dotsc{},T_n=T$, we can reconstruct an increasing labeling of $T$ as follows: suppose we have already constructed an increasing labeling of $T_{n-1}$. By definition, $T_{n-1}$ can be obtained from $T$ by removing a leaf. We label this leaf $n$, and label the rest of the tree in the same way as in $T_{n-1}$. In this way, we get an increasing labeling of $T$. To make the construction unambiguous, if there are multiple leaves in $T$ that can be removed to get $T_{n-1}$, we always choose the leftmost possible one. We claim that the resulting increasingly labeled tree $(T,\lambda{})$ is a Klazar tree. If it is not, then there is a vertex $v$ with a right associate $u$ such that either $v$ is a leaf or $\lambda{}(u)$ is smaller than the labels of all children of $v$. Since $T_{\lambda{}(u)}$ contains exactly the vertices in $T$ with label $\leq{}\lambda{}(u)$, the vertices $u$ and $v$ are adjacent leaves in $T_{\lambda{}(u)}$. Therefore, removing either $u$ or $v$ in $T_{\lambda{}(u)}$ results in the same rooted plane tree. Since $v$ is to the left of $u$, by the construction above, we would choose $v$ rather than $u$ in the $\lambda{}(u)$th step. So we get a contradiction. Thus, $(T,\lambda{})$ is a Klazar tree.

An increasingly labeled tree $(T,\lambda{})$ naturally gives a drawing $T_0,T_1,\dotsc{},T_n=T$, by setting $T_i$ to contain exactly the vertices with labels $\leq{}i$. Clearly this is a left inverse of the construction process above. It suffices to prove that different Klazar trees give different drawings. Assume that two different Klazar trees $(T,\lambda{})$ and $(T',\lambda{}')$ give the same drawing. Let $T_i$ (resp. $T'_i$) be the subtree of $T$ (resp. $T'$) spanned by vertices with labels $\leq{}i$. Then, $T_i$ and $T'_i$ are isomorphic. Thus, we can identify $T$ with $T'$. Let $k$ be the smallest positive integer such that $(T_k,\lambda{}|_{T_k})$ and $(T'_k,\lambda{}'|_{T'_k})$ are not isomorphic increasingly labeled trees. Moreover, both $T_k$ and $T'_k$ are Klazar trees. Without loss of generality, we may assume that $k=n$.

Let $u$ (resp. $u'$) be the vertex of $T$ labeled $n$ in $(T,\lambda{})$ (resp. $(T,\lambda{}')$). Note that $T_{n-1}$ (resp. $T'_{n-1}$) can be obtained from $T$ by removing $u$ (resp. $u'$). Then, both $u$ and $u'$ are leaves of $T$, and $u\neq{}u'$ by the minimality of $k$. Let $v$ be the lowest common ancestor of $u$ and $u'$. Let $v_1,v_2,\dotsc{},v_s$ be the children of $v$, ordered from left to right. Suppose that $u$ (resp. $u'$) is a descendent of $v_j$ (resp. $v_{j'}$). Then $j\neq{}j'$ by the choice of $v$. If neither $u$ nor $u'$ is a child of $v$, then the size of the subtree of $T_{n-1}$ rooted at $v_j$ would be $1$ smaller than the subtree of $T'_{n-1}$ rooted at $v_j$. If exactly one of $u$ or $u'$, say $u$, is a child of $v$, then $v$ would have one more child in $T'_{n-1}$ than in $T_{n-1}$. Both cases contradict our assumption that $T_{n-1}$ is isomorphic to $T'_{n-1}$. Thus, both $u$ and $u'$ are children of $v$. Without loss of generality, we may assume that $u'$ is to the right of $u$. Since $\lambda{}'(u')=n>\lambda{}'(u)$, the vertex $u$ has a right associate in $(T,\lambda{}')$. However, $u$ is a leaf. Thus, the condition for $(T,\lambda{}')$ being a Klazar tree is violated. So we get a contradiction.

It is shown by Klazar \cite{citation-klazar} that the cardinality of the set (7) has the given exponential generating function.
\end{proof}

We present an alternative proof by counting the cardinality of the set (2). We say that the {\it type} of an ordered partition $(A_1,\dotsc{},A_k)$ is the set $\{A_1,\dotsc{},A_k\}$, which is a partition of $[n]$. An ordered partition of type $\{ \{1\}, \{2\}, \dotsc{}, \{n\} \}$ is just a permutation of $[n]$.

\begin{lemma}
Let $p_n$ be the number of permutations of $[n]$ whose all left-to-right minima occurs at odd locations. Set $p_0=1$. Then
\begin{equation*}
\sum_{n=0}^{\infty{}}p_n\frac{x^n}{n!} = \sqrt{\frac{1+x}{1-x}}.
\end{equation*}
\end{lemma}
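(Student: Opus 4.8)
The plan is to count these permutations directly through the inversion table, which yields a product formula for $p_n$, and then to recognize the exponential generating function of that product.

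First I would encode a permutation $(a_1,\dots,a_n)$ by its inversion table $(d_1,\dots,d_n)$, where $d_i=\#\{\,j<i:a_j>a_i\,\}$ is the number of earlier entries exceeding $a_i$. This is the classical bijection from the permutations of $[n]$ onto $\{0\}\times\{0,1\}\times\dots\times\{0,1,\dots,n-1\}$, so the coordinates $d_i$ may be specified independently. The crucial point is that membership among the left-to-right minima is detected locally: the entry $a_i$ equals $\min(a_1,\dots,a_i)$ exactly when every earlier entry exceeds it, i.e.\ exactly when $d_i=i-1$. I would also record the elementary reduction that a permutation has all of its left-to-right minima at odd locations if and only if every position at which a new left-to-right minimum occurs is odd, since for each $i$ the value $m_i$ sits at the most recent such position $\le i$.

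Combining these facts, the permutations we want correspond bijectively to the inversion tables in which $d_i=i-1$ is permitted only for odd $i$. Hence $d_i$ has $i$ admissible values when $i$ is odd and $i-1$ admissible values (namely $d_i\le i-2$) when $i$ is even, and independence of the coordinates gives
\[
p_n=\prod_{\substack{1\le i\le n\\ i\ \mathrm{odd}}} i\;\cdot\!\!\prod_{\substack{1\le i\le n\\ i\ \mathrm{even}}}(i-1).
\]
Separating the cases $n=2m$ and $n=2m+1$ and using $(2m-1)!!=(2m)!/(2^m m!)$, this telescopes to $p_{2m}=((2m-1)!!)^2$ and $p_{2m+1}=(2m+1)\,((2m-1)!!)^2$; in particular both $p_{2m}/(2m)!$ and $p_{2m+1}/(2m+1)!$ equal $\binom{2m}{m}/4^m$.

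Finally I would sum the two parities of the series. From $\sum_{m\ge0}\binom{2m}{m}t^m=(1-4t)^{-1/2}$ with $t=x^2/4$ one gets $\sum_{m\ge0}\binom{2m}{m}x^{2m}/4^m=(1-x^2)^{-1/2}$, so the even part of $\sum_n p_n x^n/n!$ is $(1-x^2)^{-1/2}$ and the odd part is $x(1-x^2)^{-1/2}$, giving
\[
\sum_{n=0}^{\infty}p_n\frac{x^n}{n!}=(1+x)(1-x^2)^{-1/2}=\sqrt{\frac{1+x}{1-x}}.
\]
The only step requiring genuine care is the local characterization of left-to-right minima through the inversion table together with the parity reduction above; after that everything is a routine computation. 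As an alternative to the closed form, one may verify that the product formula satisfies $p_{n+1}=p_n+n(n-1)\,p_{n-1}$, which is precisely the coefficient recurrence of the differential equation $(1-x^2)F'=F$ obeyed by $F=\sqrt{(1+x)/(1-x)}$.
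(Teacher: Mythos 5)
Your proof is correct and is essentially the paper's argument in different clothing: the paper deletes the last entry and renormalizes to get $p_n=np_{n-1}$ for odd $n$ and $p_n=(n-1)p_{n-1}$ for even $n$, which is exactly your position-by-position count of $i$ versus $i-1$ admissible inversion-table values (the last coordinate $d_n$ records the rank of the appended entry), and both routes then land on $p_{2m}=((2m-1)!!)^2$, $p_{2m+1}=(2m+1)!!\,(2m-1)!!$ and the same central-binomial series identity for $(1-x^2)^{-1/2}$. All the individual steps check out, including the local characterization of record positions via $d_i=i-1$ and the closing verification of the recurrence $p_{n+1}=p_n+n(n-1)p_{n-1}$.
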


\begin{proof}
The proof is found in a post by Callan in \cite[A000246]{citation-oeis}. For any permutation $(a_1,\dotsc{},a_n)$ of $[n]$ all of whose left-to-right minima occur at odd locations, we can construct a permutation of $[n-1]$ by removing $a_n$ and decrementing all elements greater than $a_n$ by $1$. This new permutation has all left-to right minima at odd locations. On the other hand, for any permutation of $[n-1]$ whose all left-to-right minima occurs at odd locations and any $a_n\in{}[n]$, we can construct a permutation of $[n]$ by incrementing all elements greater than $a_n$ by $1$ and adding $a_n$ to the end. This new permutation has all left-to-right minima at odd locations if and only if $n$ is odd or $n$ is even and $a_n>1$. Therefore, from this correspondence we get $p_n=np_{n-1}$ for odd $n$ and $p_n=(n-1)p_{n-1}$ for even $n$. By induction, $p_{n}=((n-1)!!)^2$ for even $n$ and $p_n=n!!(n-2)!!$ for odd $n$.

Then,
\begin{align*}
\sum_{n=0}^{\infty{}}p_n\frac{x^n}{n!} &= \sum_{n\text{ even}}((n-1)!!)^2\frac{x^n}{n!}+\sum_{n\text{ odd}}n!!(n-2)!!\frac{x^n}{n!}\\
&=\sum_{m=0}^{\infty{}}((2m-1)!!)^2\frac{x^{2m}}{(2m)!}+\sum_{m=0}^{\infty{}}(2m+1)!!(2m-1)!!\frac{x^{2m+1}}{(2m+1)!}\\
&=\sum_{m=0}^{\infty{}}((2m-1)!!)^2\frac{x^{2m}}{(2m)!}+\sum_{m=0}^{\infty{}}((2m-1)!!)^2\frac{x^{2m+1}}{(2m)!}\\
&=(1+x)\sum_{m=0}^{\infty{}}((2m-1)!!)^2\frac{x^{2m}}{(2m)!}.
\end{align*}

On the other hand,
\begin{align*}
\frac{1}{\sqrt{1-x^2}} &= \sum_{m=0}^{\infty{}}{1/2 \choose m}(-1)^mx^{2m}\\
&=\sum_{m=0}^{\infty{}}\frac{(2m-1)!!}{2^m(m!)}x^{2m}\\
&=\sum_{m=0}^{\infty{}}((2m-1)!!)^2\frac{x^{2m}}{(2m)!}.
\end{align*}

So
\begin{equation*}
\sum_{n=0}^{\infty{}}p_n\frac{x^n}{n!} = \frac{1+x}{\sqrt{1-x^2}} = \sqrt{\frac{1+x}{1-x}}.
\end{equation*}
\end{proof}

\begin{lemma}
The number of ordered partitions of $[n]$ of type $\{A_1,\dotsc{},A_k\}$ all of whose left-to-right minima occurs at odd locations equals $p_k$.
\end{lemma}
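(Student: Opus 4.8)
The plan is to fix a type $\{A_1,\dotsc{},A_k\}$ and exhibit a bijection between the ordered partitions of this type all of whose left-to-right minima occur at odd locations and the permutations of $[k]$ all of whose left-to-right minima occur at odd locations; the latter set has cardinality $p_k$ by the previous lemma, which finishes the proof.

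First I would observe that an ordered partition of type $\{A_1,\dotsc{},A_k\}$ is precisely a linear ordering $(B_1,\dotsc{},B_k)$ of the $k$ blocks, so there are $k!$ of them. Since the blocks are disjoint and non-empty, their minima $\min(B_1),\dotsc{},\min(B_k)$ are $k$ distinct integers. I would then define the map $\Phi$ sending such an ordering to the permutation $(\sigma{}_1,\dotsc{},\sigma{}_k)$ of $[k]$, where $\sigma{}_i$ is the rank of $\min(B_i)$ among $\{\min(B_1),\dotsc{},\min(B_k)\}$, so that the smallest block-minimum has rank $1$. Knowing the type together with the rank pattern $(\sigma{}_1,\dotsc{},\sigma{}_k)$ recovers each $B_i$ as the block whose minimum has rank $\sigma{}_i$, so $\Phi$ is injective; as both its domain and codomain have cardinality $k!$, it is a bijection.

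The heart of the argument is to check that $\Phi$ preserves the odd-location property. Here I would use the observation that $m_i=\min(B_1\cup{}\dotsb{}\cup{}B_i)$ equals the smallest among $\min(B_1),\dotsc{},\min(B_i)$, and that the block containing $m_i$ sits at the position $j\leq{}i$ at which this running minimum of block-minima was last lowered. Consequently $m_i$ lies in a block at an odd position for every $i$ if and only if every position $j$ at which $\min(B_j)$ is a strict record of the numeric sequence $\min(B_1),\dotsc{},\min(B_k)$ is odd. Replacing the block-minima by their ranks leaves these record positions unchanged, so $(B_1,\dotsc{},B_k)$ has all left-to-right minima at odd locations exactly when $\Phi(B_1,\dotsc{},B_k)$ does. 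Combining this with the bijectivity of $\Phi$ and the previous lemma yields the count $p_k$.

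The main obstacle I anticipate is the bookkeeping in the middle step: one must verify cleanly that the position of the block containing $m_i$ coincides with a record position of the block-minima sequence, and that the odd-location condition only constrains those record positions, because the running minimum $m_i$ stays constant between consecutive records. Once this is pinned down, the reduction to the permutation count is immediate, since passing to ranks is manifestly record-preserving and $\Phi$ is a bijection.
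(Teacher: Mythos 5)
Your proof is correct and follows essentially the same route as the paper's: the paper likewise replaces each block $A_i$ by $\{\min A_i\}$, notes that this does not change the locations of the left-to-right minima, and thereby reduces to counting permutations, giving $p_k$. Your write-up merely makes explicit the standardization to ranks in $[k]$ and the verification that the odd-location condition depends only on the record positions of the block-minima sequence, both of which the paper leaves implicit.
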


\begin{proof}
We may replace each $A_i$ by $\{\min{}A_i\}$ without affecting the locations of the left-to-right minima. Therefore, we can reduce the problem to the case of ordered partitions of $k$ distinct numbers. Thus, the number is $p_k$.
\end{proof}

Let $b_n$ denote the number (2). Set $b_0=0$. Then, it follows from the composition formula \cite[Theorem 5.1.4]{citation-stanley} that
\begin{equation*}
\sum_{n=0}^{\infty{}}b_n\frac{x^n}{n!} = \sqrt{\frac{1+(e^x-1)}{1-(e^x-1)}} = \sqrt{\frac{e^x}{2-e^x}}.
\end{equation*}

\begin{example}
The build-tree codes $0^+1^+1^+1^-0^+3^+$ and $0^+1^-1^+1^+0^+3^+$ in Example \ref{example-signed-permutation} and Example \ref{example-tree} can be obtained from each other by reversing the sequence of signs over each fixed numeral in the build-tree code. Therefore, the objects in Table 1 and Table 2 are in bijection.
\end{example}

\begin{example}

Figure 2 shows the $7$ bounded regions in $\mathcal{H}_3\cap{}P$.

\begin{figure}[h!]
\caption{Seven bounded regions in $\mathcal{H}_3\cap{}P$}
\centering
\includegraphics[width=0.6\textwidth]{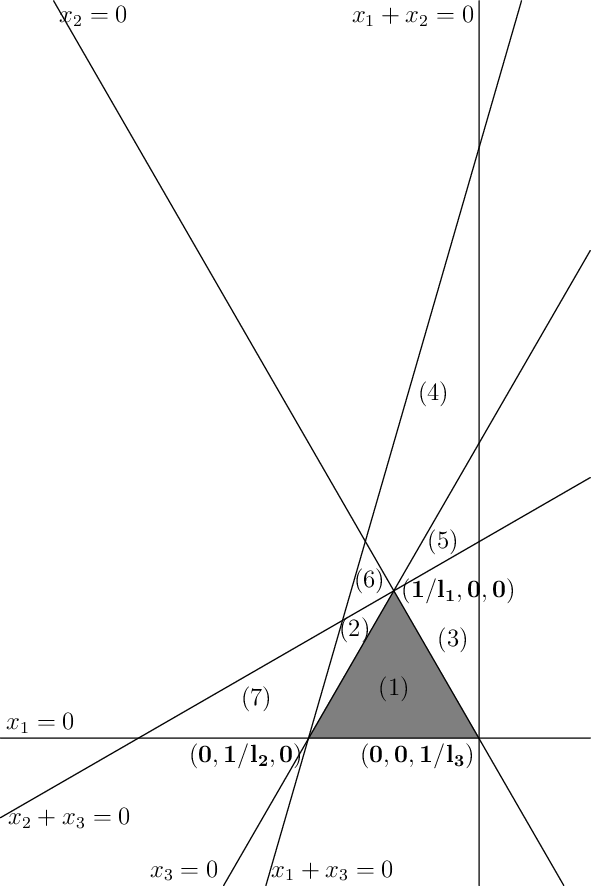}
\end{figure}

These $7$ bounded regions are labeled $(1), (2), \dotsc{}, (7)$. They are:
\begin{equation*}
\begin{array}{cl}
(1) & x_1>0,x_2>0,x_3>0 \\
(2) & x_1>0,x_2>0,x_3<0,|x_1|,|x_2|>|x_3| \\
(3) & x_1>0,x_2<0,x_3>0,|x_1|,|x_3|>|x_2| \\
(4) & x_1>0,x_2<0,x_3<0,|x_1|>|x_2|,|x_3| \\
(5) & x_1>0,x_2<0,x_3>0,|x_1|>|x_2|>|x_3| \\
(6) & x_1>0,x_2>0,x_3<0,|x_1|>|x_3|>|x_2| \\
(7) & x_1>0,x_2>0,x_3<0,|x_2|>|x_3|>|x_1| \\
\end{array}
\end{equation*}

Table 3 shows various objects that are in bijection with the $7$ bounded regions.

\begin{table}[h!]
\caption{The bijections for the $n=3$ case}
\begin{tabular}{|c|c|c|c|c|c|}
\toprule{}
\parbox[t]{1cm}{Label in Figure 2} & \parbox[t]{1.5cm}{Ordered partition} & \parbox[t]{1.5cm}{Signed permutation with decreasing blocks} & \parbox[t]{1.5cm}{Build-tree code such that there is a $v^+$ after each $v^-$} & \parbox[t]{1.5cm}{Build-tree code such that there is a $v^+$ before each $v^-$} & Klazar tree \\
\midrule{}
\parbox[c]{1cm}{\centering{}(1)} & \parbox[c]{1.5cm}{\centering{}$123$} & \parbox[c]{1.5cm}{\centering{}$3^+2^+1^+$} & \parbox[c]{1.5cm}{\centering{}$0^+0^+0^+$} & \parbox[c]{1.5cm}{\centering{}$0^+0^+0^+$} & \parbox[c]{3cm}{\centering{}\includegraphics[scale=0.85]{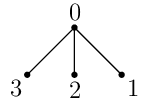}} \\
\midrule{}
\parbox[c]{1cm}{\centering{}(2)} & \parbox[c]{1.5cm}{\centering{}$12,3$} & \parbox[c]{1.5cm}{\centering{}$2^+1^+3^-$} & \parbox[c]{1.5cm}{\centering{}$0^+0^+1^+$} & \parbox[c]{1.5cm}{\centering{}$0^+0^+1^+$} & \parbox[c]{3cm}{\centering{}\includegraphics[scale=0.85]{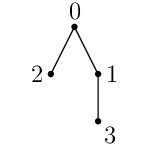}} \\
\midrule{}
\parbox[c]{1cm}{\centering{}(3)} & \parbox[c]{1.5cm}{\centering{}$13,2$} & \parbox[c]{1.5cm}{\centering{}$3^+1^+2^-$} & \parbox[c]{1.5cm}{\centering{}$0^+1^+0^+$} & \parbox[c]{1.5cm}{\centering{}$0^+1^+0^+$} & \parbox[c]{3cm}{\centering{}\includegraphics[scale=0.85]{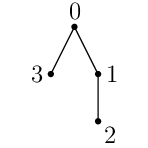}} \\
\midrule{}
\parbox[c]{1cm}{\centering{}(4)} & \parbox[c]{1.5cm}{\centering{}$1,23$} & \parbox[c]{1.5cm}{\centering{}$1^+3^-2^-$} & \parbox[c]{1.5cm}{\centering{}$0^+1^+1^+$} & \parbox[c]{1.5cm}{\centering{}$0^+1^+1^+$} & \parbox[c]{3cm}{\centering{}\includegraphics[scale=0.85]{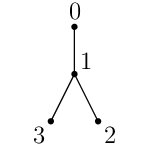}} \\
\midrule{}
\parbox[c]{1cm}{\centering{}(5)} & \parbox[c]{1.5cm}{\centering{}$1,2,3$} & \parbox[c]{1.5cm}{\centering{}$1^+2^-3^+$} & \parbox[c]{1.5cm}{\centering{}$0^+1^+2^+$} & \parbox[c]{1.5cm}{\centering{}$0^+1^+2^+$} & \parbox[c]{3cm}{\centering{}\includegraphics[scale=0.85]{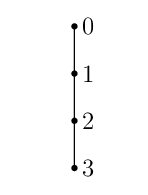}} \\
\midrule{}
\parbox[c]{1cm}{\centering{}(6)} & \parbox[c]{1.5cm}{\centering{}$1,3,2$} & \parbox[c]{1.5cm}{\centering{}$1^+3^-2^+$} & \parbox[c]{1.5cm}{\centering{}$0^+1^-1^+$} & \parbox[c]{1.5cm}{\centering{}$0^+1^+1^-$} & \parbox[c]{3cm}{\centering{}\includegraphics[scale=0.85]{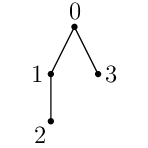}} \\
\midrule{}
\parbox[c]{1cm}{\centering{}(7)} & \parbox[c]{1.5cm}{\centering{}$2,3,1$} & \parbox[c]{1.5cm}{\centering{}$2^+3^-1^+$} & \parbox[c]{1.5cm}{\centering{}$0^+0^+2^+$} & \parbox[c]{1.5cm}{\centering{}$0^+0^+2^+$} & \parbox[c]{3cm}{\centering{}\includegraphics[scale=0.85]{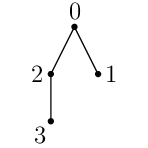}} \\
\cline{1-6}
\end{tabular}
\end{table}

\end{example}

\subsection*{Acknowledgements}

I would like to thank Bernd Sturmfels for guiding me through the entire project. Also, I would like to thank Lior Pachter for an insightful discussion that motivated the project. This project was supported by DARPA (grant DARPA-11-65-Open-BAA-FP-068).

\noindent
\footnotesize {\bf Author's address}:
Department of Mathematics, University of California, Berkeley, CA 94720, USA.
{\tt qingchun@berkeley.edu}

\end{document}